\title{Efficient Sparse Clustering of High-Dimensional Non-spherical Gaussian Mixtures}
\author{Martin Azizyan \qquad Aarti Singh \qquad Larry Wasserman\\ Carnegie Mellon University}
\begin{document}

\maketitle

\begin{abstract}
We consider the problem of clustering data points in high dimensions, i.e. 
when the number of data points may be much smaller than the number of 
dimensions.
Specifically, we consider a Gaussian mixture model (GMM) with non-spherical 
Gaussian components, where the clusters are distinguished by only a 
few relevant dimensions. The method we propose is a combination of a 
recent approach for learning parameters of a Gaussian mixture model 
and sparse linear discriminant analysis (LDA). In addition to cluster 
assignments, the method returns an estimate of the set of features 
relevant for clustering. Our results indicate that the sample complexity 
of clustering depends on the sparsity of the relevant feature set, while 
only scaling logarithmically with the ambient 
dimension. Additionally, we require much milder assumptions than 
existing work on clustering in high dimensions. In particular, we do not 
require spherical clusters nor necessitate mean separation along relevant dimensions. 
\end{abstract}

\section{Introduction}
The last few years have seen extensive research on developing efficient 
methods that can leverage sparsity of the relevant feature set for 
supervised learning (classification, regression etc.) of high-dimensional 
data. These methods show that learning and selection of relevant features 
is possible even if the number of training data $n$ is much less than the 
number of features $d$, which is typically the case for high-dimensional data. 
However, similar results for the unsupervised task of clustering are largely 
non-existent. The task of clustering high-dimensional data and extracting 
relevant features arises routinely in many applications, e.g., clustering of 
patients based on their gene expression profiles and identifying the relevant 
genotypes, grouping web content and identifying relevant characteristics, 
clustering proteins with similar drug expression profiles, etc. 

While there have been recent attempts at clustering high-dimensional data 
and selecting relevant 
features, these either do not come with theoretical 
guarantees or assume very strong conditions that suggest that even 
employing marginal feature selection, using projections of the data onto 
individual coordinates, as a pre-processing step before 
clustering might suffice. 
Thus, while supervised learning in high dimensions requires single-step 
methods that can perform the learning task and select relevant features 
simultaneously, it is not clear whether a sophisticated single-step approach 
is necessary for clustering in high dimensions. 

A simple example that will convince the reader that pre-processing the 
data using a marginal (coordinate-wise) feature selection step does not 
suffice for clustering, is provided by a mixture of two non-spherical 
Gaussian components (see Figure~\ref{fig:nonisoGMM}). It is clear that 
$x_1$ is relevant to define the clusters, however the marginal distribution 
of the data when projected onto $x_1$ is a single unimodal Gaussian. Hence, 
marginal feature selection cannot be used to identify the relevant features. 
\begin{figure}
\begin{center}
\includegraphics[scale = 0.5]{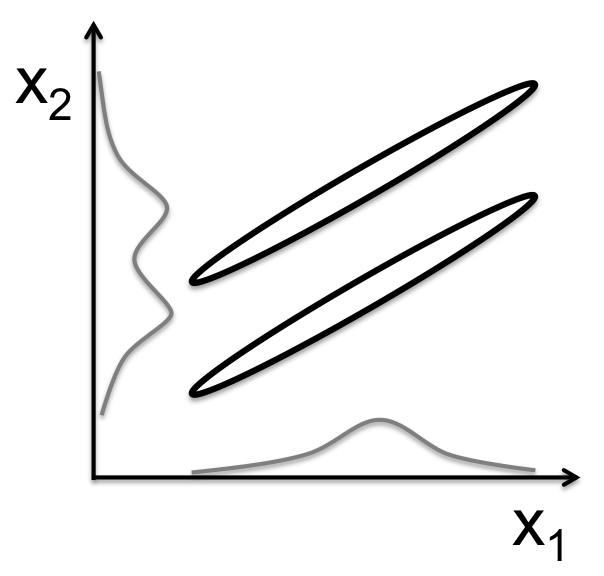}
\label{fig:nonisoGMM}
\caption{An example of two clusters where both features $x_1$ and $x_2$
are relevant to define the clusters, however the marginal distribution of 
data points along $x_1$ is unimodal, and hence no marginal feature selection 
method can work.}
\end{center}
\end{figure}
Motivated by this example, we consider a simple non-spherical Gaussian 
mixture model (defined formally in the next section) for clustering 
high-dimensional data, and aim to provide a computationally efficient 
algorithm for simultaneous feature 
selection and clustering, that comes with sample complexity guarantees 
that depend primarily on the number of relevant features (intrinsic dimension) 
and only logarithmically on the total number of features (ambient dimension). 

\noindent{\bf Related work.} Before we describe our approach and results, 
we discuss related work in some more detail. Sparse clustering methods that 
perform feature selection for high-dimensional data have received attention 
recently. 

K--means based approaches begin with the typical K--means objective 
and introduce some sparsity-inducing penalties \cite{sun2012regularized, witten2010framework, guo2010pairwise, andrews13selection,chang2014sparse}.
While the penalization introduced in these papers is convex (akin to supervised
learning approaches), the K--means objective itself is non-convex and in fact 
NP-hard. Thus, in general, solving any of these objectives is NP-hard and the 
papers propose iterative approaches akin to Llyod algorithm for solving the 
K--means objective. Moreover, these papers do not provide any statistical 
guarantees, with the exception of \cite{sun2012regularized,chang2014sparse}. 
The latter two papers do provide some consistency results, however these 
are for the true objective optimizers only which are NP-hard. 
Moreover, the notion of relevant features considered in all these papers
is that the means are separated along each relevant feature, which may not 
necessarily be the case as demonstrated in Figure~\ref{fig:nonisoGMM}. 

Another non-parametric approach to feature selection for clustering that is consistent in 
high dimensions is presented in \cite{hall2010clustering}, however it relies 
on pre-screening features which appear marginally unimodal, again failing 
for the example in Figure~\ref{fig:nonisoGMM}. 

Learning Gaussian mixture models (GMMs) has a long history, particularly in 
computer science theory community, where the emphasis has been on 
relaxing the assumptions under which GMMs can be learnt under various
metrics such as estimating the distribution, 
parameters or clustering. However, these papers primarily focus on computational 
tractability and mostly have high sample complexity, particularly in 
high dimensions. For example, the most relevant to this paper is the work 
on learning non-spherical GMMs where the components are separated by a 
hyperplane \cite{brubaker2008isotropic}, however it has sample complexity that depends 
as $d^4$ on the ambient dimension. The proposed estimator relies
on first making the data isotropic (zero mean and overall identity covariance). 
This is achieved by pre-whitening the data by multiplying it with the inverse
sample covariance matrix. However, in high dimensions when the number of samples
drawn from the mixture $n \ll d$ (the number of features), the sample covariance
matrix is not invertible and hence the method cannot succeed. 
Moreover, no work in this line, to the 
best of our knowledge, addresses feature selection. 
There is a very recent work \cite{hardt2014mixture} where the question of 
optimal sample complexity for GMM parameter estimation in $\ell_{\infty}$ 
norm is addressed and we build on this paper to provide 
clustering and feature selection guarantees. 

Apart from the work in the computer science theory community, multiple 
statistical approaches have also been proposed to learning Gaussian mixture 
models in high dimensions and feature selection \cite{maugis2009variable, 
lee2012variable, raftery2006variable, pan2007penalized, michel2014sparse, 
maugis2008non, maugis2008slope, Akshay_GMM, GMMinvCov, GMMinvCov2}.
These employ various sparsity assumptions, e.g.\ that the components are 
spherical and have sparse mean vectors, or that the covariance matrices (or their 
inverses) are also sparse, etc.
However, as with the K--means based methods, these approaches either a) require 
approximating maximum likelihood parameters without providing efficient 
algorithms; or b) do not come with precise finite sample statistical properties of 
the estimators. 

Assuming mixtures of equal weight spherical components with sparse mean separation, \cite{azizyan2013minimax} provide some minimax bounds for the problem with sample complexity that scales with the number of relevant features and only logarithmically 
with the total number of features. 
However, the assumption of spherical components necessitates that relevant features 
are characterized by mean separation, and hence a simple marginal variance thresholding procedure works for feature selection. Thus, the results do not apply for cases like the one
described in Figure~\ref{fig:nonisoGMM}.

Under less restrictive assumptions on the components, \cite{arias2014sparse-mixture} analyze detection of high-dimensional Gaussian mixtures (vs. a single Gaussian as null) and selection of sparse set of features along which mean separation occurs, from a minimax perspective. 
Their minimax optimal estimators involve combinatorial search, and while the authors also investigate 
some tractable procedures, they are either based on marginal feature selection or assume 
that the component covariance matrices are known and diagonal. 

Finally, we mention that if the cluster assignments are known, the problem of 
clustering reduces to binary classification. And
specifically, clustering using a mixture of two identical covariance Gaussians reduces
to linear discriminant analysis, if the cluster assignments are known. Feature selection 
using a sparse linear discriminant analysis has been analyzed in \cite{cai2011sparse-discriminant}. We will leverage this approach, in combination with the results for 
$\ell_\infty$ parameter estimation for GMMs \cite{hardt2014mixture} to demonstrate 
a method and results for sparse clustering and feature selection under high-dimensional 
non-spherical GMMs.

\noindent{\bf Contributions.} Our contributions can be summarized as follows:
\begin{itemize}
\item We present a computationally efficient method for clustering in high dimensions 
that comes with finite sample guarantees on the misclustering rate. Our results show that sample complexity scales quadratically 
with the number of relevant features (inherent dimension), but only logarithmically with total number of features (ambient dimension). As a result, the proposed method enables learning of non-spherical Gaussian mixtures of two components in high dimensions (when the 
number of data points may be much smaller than the number of features), without assuming 
sparsity of the covariance or inverse covariance matrix. 
\item We provide guarantees for feature selection under a very generalized notion of relevant features that does not require that clusters 
necessarily have mean separation along the relevant features. This allows us to handle cases like that shown 
in Figure~\ref{fig:nonisoGMM}. 
\end{itemize}

The rest of the paper is organized as follows. In section~\ref{sec:setup}, we formalize our setup. The proposed method combining ideas from \cite{hardt2014mixture} and \cite{cai2011sparse-discriminant} is presented in section~\ref{sec:method}. Section~\ref{sec:results} states our results on misclustering rate, sample complexity and feature selection in high dimensions. We conclude with some open directions in section~\ref{sec:concl}.

\section{Problem Setup and Assumptions}
\label{sec:setup}
Inspired by Figure~\ref{fig:nonisoGMM}, we consider the following simple model. 
\begin{itemize}
\item [A1)] {\em Data generating model:} The data points $X_1, \dots, X_n$ are generated i.i.d. from a mixture of two Gaussians
of the form $\frac{1}{2}\Ncal(\mu_1,\Sigma)+\frac{1}{2}\Ncal(\mu_2,\Sigma)$ in $\RR^d$. 
\end{itemize}
Extensions to unequal weights and covariances, and more than 2 clusters are interesting, 
but out of the scope of this paper. 

The error of a clustering $\psi:\RR^d\rightarrow\{1,2\}$ is defined as follows.
Let $X$ be a random draw from the true mixture, and let $Y\in\{1,2\}$ be the (latent) label of the mixture component from which $X$ was drawn, i.e. $Y-1\sim\mathrm{Bern}\rbr{\frac{1}{2}}$ and $X|Y\sim\Ncal(\mu_Y,\Sigma)$.
We define the {\em overlap} of the clustering $\psi$ as $\Upsilon(\psi):=\min_\pi\PP(\psi(X)\neq \pi(Y))$ where the minimum is over permutations $\pi:\{1,2\}\rightarrow\{1,2\}$, and the error of $\psi$ is defined as $L(\psi):=\Upsilon(\psi)-\min_{\psi'}\Upsilon(\psi')$.

We define the optimal clustering $\psi^\ast:=\argmin_{\psi}\Upsilon(\psi)$, which coincides with the Bayes optimal classifier in the supervised problem of prediting $Y$ from $X$:
\begin{equation}
\psi^\ast(x)=
\left\{ \begin{array}{rcl}
1 & \mbox{if} & (\mu_0-x)^\tp\Sigma^{-1}\Delta_\mu<0, \\
2 & \mbox{o.w.} & 
\end{array}
\right.
\label{eq:opt}
\end{equation}
where $\mu_0=\frac{\mu_1+\mu_2}{2}$ and $\Delta_\mu=\frac{\mu_1-\mu_2}{2}$.
Notice that the Bayes optimal decision boundary is linear and hence the problem 
corresponds to linear discriminant analysis (LDA). 

If the labels are known, one can simply plug-in sample estimates of class 
conditional means $\widehat \mu_Y$ and covariance matrix $\widehat \Sigma$ 
to obtain an empirical classification rule. In clustering, the labels are latent. 
However, if we can learn the parameters $\mu_1,\mu_2,\Sigma$ of the Gaussian 
mixture model, we can plug these in and obtain a similar empirical clustering. 

In the high-dimensional setting $(n \ll d)$, estimates of the covariance matrix 
are typically not invertible, necessitating some additional assumptions to make 
the problem well-posed. In high-dimensional clustering, it is natural to expect 
that not all features are relevant for clustering. For example, in clustering proteins 
based on their drug expression profiles, not all drugs are responsible for differentiation
of the expressions. This assumption can be captured as follows. 
\begin{itemize}
\item [A2)] {\em Sparsity of relevant features: }A feature is 
relevant if the optimal clustering rule $\psi^*$ in Eq.~\ref{eq:opt} depends on it. Equivalently, relevant features 
$S\subseteq \{1,\dots, d\}$ 
are given by non-zero coordinates of $\Sigma^{-1}\Delta_\mu$, where 
$|S| \leq s \leq d$. 
\end{itemize}
We will 
demonstrate that the sample complexity of clustering in high dimensions depends 
on the number of non-zero coordinates $\|\Sigma^{-1}\Delta_\mu\|_0 = |S| \leq s$, 
and only logarithmically on the total number of features $d$. 

In comparison, existing work on high-dimensional clustering typically assumes that $\Delta_\mu$ is sparse and the relevant features are given by non-zero coordinates 
of $\Delta_\mu$, i.e., the coordinates along which mean separation occurs. Thus, 
they cannot identify relevant features such as $x_1$ in Figure~\ref{fig:nonisoGMM}. 
Also, some existing work on high-dimensional GMM learning assumes sparsity of 
the covariance $\Sigma$ or inverse covariance matrix $\Sigma^{-1}$. 
These assumptions used in previous work are more restrictive than (and can be considered
as special cases of) 
our notion of 
relevant features (non-zero coordinates of $\Sigma^{-1}\Delta_\mu$) which is 
precisely what the optimal clustering function depends on. 

We make the following additional assumption which guarantees 
success of our computationally feasible method that uses the $\ell_1$ penalty.
\begin{itemize}
\item[A3)]  {\em Restricted eigenvalue property:} The covariance matrix $\Sigma$ 
satisfies 
$$\min_{S\subseteq \{1,\dots,d\} : |S|\leq s}\min_{v\neq 0} \left\{\frac{\|\Sigma v\|_2}{\|v\|_2}:
\|v_{S^c}\|_1 \leq \|v_S\|_1\right\} \geq \eta >0$$
\end{itemize}
Similar assumption is required for feature selection in supervised learning using 
$\ell_1$ penalties (c.f. \cite{Bickel09}).

While the above assumptions suffice to evaluate the clustering performance
in high dimensions, we also seek to correctly identify the set of relevant features. 
For this, we need to assume that each relevant feature is ``relevant enough" to be 
detectable using a finite sample. Formally,
\begin{itemize}
\item [A4)] {\em Signal strength along each relevant feature:} Let $\beta = \Sigma^{-1}\Delta_\mu$. Then $\beta_{\min} := \min_i \beta(i)
\geq$ $C' s\left(\frac{\log d}{n}\right)^{1/6}$, where $C'>0$ is a constant.
\end{itemize}

\section{Proposed Method}
\label{sec:method}

Given samples $X_1, \dots, X_n$ from the unknown mixture $\frac{1}{2}\Ncal(\mu_1,\Sigma)+\frac{1}{2}\Ncal(\mu_2,\Sigma)$, we propose a procedure composed of three stages.
First we aquire initial estimates of mixture parameters using the algorithm of Hardt and Price \cite{hardt2014mixture}.
Next we estimate the discriminating direction $\beta := \Sigma^{-1}\Delta_\mu$ by means of solving a convex program analogous to the proposal of Cai and Liu \cite{cai2011sparse-discriminant} for sparse supervised linear classification.
Finally we threshold the elements of the estimate of $\beta$ to recover the relevant features $S$.

Precisely, the steps are as follows.
\begin{enumerate}
\item Obtain estimates $\widehat{\mu}_1,\widehat{\mu}_2,\widehat{\Sigma}$ by invoking Algorithm \textsc{GMFitHardtPrice} defined in Section~\ref{sec:algo-hardt} 
 with $\epsilon,\delta$  satisfying $\epsilon= C(\log(dn/\delta)/n)^{1/6}$ for some constant $C$.
\end{enumerate}

Algorithm \textsc{GMFitHardtPrice} assumes the availability of a separate algorithm for mixture learning in a low-dimensional setting.
Specifically, \textsc{GMFitHardtPrice} uses an algorithm \textsc{GMFitLowDim} that also takes as input a set of samples as described in (A1) and parameters $\epsilon,\delta>0$, and has the same properties as those of \textsc{GMFitHardtPrice} stated in Theorem~\ref{thm:gmm}, but only for up to $2$ dimensional mixtures\footnote{Since we are assuming the components of the mixture share the same covariance, the algorithms appear slightly simplified compared to those described by Hardt and Price}.
Hardt and Price give one candidate for \textsc{GMFitLowDim}, which combines a type of moment method approach and a grid search over parameters.
The algorithm involves a large number of steps, and we do not restate it here due to the space constraint.
Since much of the computational and statistical difficulty of general Gaussian mixture learning is not present when only considering such small dimensional cases, we believe this does not take away from the exposition.


\begin{enumerate}
\setcounter{enumi}{1}
\item For some $\lambda>0$, set
\begin{align}
\widehat{\beta}_\lambda=\argmin_{z\in\RR^d}\|z\|_1 \mathrm{\;subject\,to\;} \|\widehat{\Sigma}z-\widehat{\Delta}_\mu\|_\infty\leq\lambda \label{eq:def-opti}
\end{align}
 where $\widehat{\mu}_0=\frac{\widehat{\mu}_1+\widehat{\mu}_2}{2}$, $\widehat{\Delta}_\mu=\frac{\widehat{\mu}_1-\widehat{\mu}_2}{2}$, $\|\cdot\|_\infty$ is the elementwise absolute maximum, and $\lambda$ is a tuning parameter the choice of which is discussed below.
The estimated clustering is defined as
$$
\widehat{\psi}_\lambda(x)=
\left\{ \begin{array}{rcl}
1 & \mbox{if} & (\widehat{\mu}_0-x)^\tp\widehat{\beta}_\lambda<0, \\
2 & \mbox{o.w.} & 
\end{array}
\right.
$$
\end{enumerate}
Proposition~\ref{prop:main} in Section~\ref{sec:results} ties the error in the estimates of $\widehat{\Sigma}$ and $\widehat{\Delta}_\mu$ to the error of $\widehat{\psi}_\lambda$.
In this result, the bound on the clustering error is minimized when $\lambda$ takes on the smallest value such that the true $\beta$ is a feasible point of the constraints in (\ref{eq:def-opti}).
A specific value for $\lambda$ is given in Corollary~\ref{cor:nclerr}, based on a few additional technical assumptions.
\begin{enumerate}
\setcounter{enumi}{2}
\item 
Estimate the relevant features $S$ by thresholding $\widehat \beta_\lambda$:
$$
\widehat S = \{i: \widehat \beta_\lambda(i) > c\cdot\lambda \sqrt{s}\}
$$
where $c > 0$ is a constant.
\end{enumerate}
Our results for support recovery hold when $c>2/\eta$.

\subsection{Algorithm \textsc{GMFitHardtPrice}.}
\label{sec:algo-hardt}
\textbf{Input:} Samples $X_1,\ldots,X_n\in\RR^d$; $\epsilon,\delta>0$.

\begin{enumerate}
\item Let $\widehat{V}=\max_{i\in[d]} \frac{1}{n}\sum_{j=1}^n X_j(i)^2-\rbr{\frac{1}{n}\sum_{j=1}^n X_j(i)}^2$.
Set $\epsilon^\ast=\epsilon/20$ and $\delta^\ast=\delta/(10d^2)$.
Algorithm \textsc{GMFitLowDim} will always be invoked with parameters $\epsilon^\ast$ and $\delta^\ast$.
\end{enumerate}

\textbf{Estimate $\widehat{\mu}_1$ and $\widehat{\mu}_2$:}
\begin{enumerate}
\setcounter{enumi}{1}
\item For each $i\in[d]$ (where $[d]=\{1,\ldots,d\}$), use \textsc{GMFitLowDim} on the univariate data $X_1(i),\ldots,X_n(i)$ obtaining estimates of the means $\xi_1(i)$ and $\xi_2(i)$ (discard the variance estimates).
\item If $|\xi_1(i)-\xi_2(i)|\leq\epsilon\widehat{V}/4$ for all $i\in[d]$, put $\widehat{\mu}_1=\widehat{\mu}_2=\xi_1$ (and skip step 4).
\item Otherwise, let $i$ be the smallest index such that $|\xi_1(i)-\xi_2(i)|>\epsilon\widehat{V}/4$ and, for each $j\in[d]\setminus\{i\}$ do:
\begin{itemize}
\item[a)] Apply \textsc{GMFitLowDim} to the bivariate data $[X_1(i),X_1(j)],\ldots,[X_n(i),X_n(j)]$ to obtain mean estimates $(\nu_k(i),\nu_k(j))$ for $k=1,2$.
\item[b)] Let $k\in\{1,2\}$ such that $|\xi_1(i)-\nu_k(i)|\leq\epsilon\widehat{V}/10$.
If such $k$ does not exist, the algorithm terminates with failure.
\item[c)] Set $\widehat{\mu}_1(j)=\nu_k(j)$ and $\widehat{\mu}_2(j)=\nu_{3-k}(j)$.
\end{itemize}
\end{enumerate}
\textbf{Estimate $\widehat{\Sigma}$:}
\begin{enumerate}
\setcounter{enumi}{4}
\item For each $i\in[d]$, invoke \textsc{GMFitLowDim} on the univariate data $X_1(i),\ldots,X_n(i)$ and obtain an estimate of the diagonal element $\widehat{\Sigma}(i;i)$ (discarding the estimates of the means).
\item For each $i<j\in[d]$, invoke \textsc{GMFitLowDim} on the bivariate data $[X_1(i),X_1(j)],\ldots,[X_n(i),X_n(j)]$ and obtain an estimate of $\widehat{\Sigma}(i,j)=\widehat{\Sigma}(j,i)$.
\item Return $\widehat{\mu}_1,\widehat{\mu}_2,\widehat{\Sigma}$.
\end{enumerate}

\section{Main Result}
\label{sec:results}

Our first result states that if the parameters of the Gaussian mixture model in (A1) 
can be learnt accurately in $\ell_\infty$ norm, then the misclustering rate of the 
proposed method is small. 

\begin{proposition}
Assume (A1).
For any $\epsilon$, if $\max\rbr{\|\mu_1-\widehat{\mu}_{\pi(1)}\|_\infty^2,\|\mu_2-\widehat{\mu}_{\pi(2)}\|_\infty^2,\|\Sigma-\widehat{\Sigma}\|_\infty}\leq \epsilon$ for some permutation $\pi:\{1,2\}\rightarrow\{1,2\}$, and if $\epsilon\|\beta\|_1+\sqrt{\epsilon}\leq\lambda$, then
\begin{align*}
L(\widehat{\psi}_\lambda)
&\leq
\phi\rbr{\max\rbr{\frac{\Delta_\mu^\tp\Sigma^{-1}\Delta_\mu-\epsilon_1}{\sqrt{\Delta_\mu^\tp\Sigma^{-1}\Delta_\mu+\epsilon_2}},\; 0}}  \frac{
\epsilon_1+\epsilon_2 }{\sqrt{\Delta_\mu^\tp\Sigma^{-1}\Delta_\mu}}
\end{align*}
where $\epsilon_1=(2\lambda+3\sqrt{\epsilon})\|\beta\|_1$, $\epsilon_2=\epsilon\|\beta\|_1^2 + 3(\lambda+\sqrt{\epsilon})\|\beta\|_1$, and $\phi$ is the standard normal density.
\label{prop:main}
\end{proposition}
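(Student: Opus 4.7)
The plan is to reduce $L(\widehat{\psi}_\lambda)$ to a one-dimensional Gaussian tail calculation, then use feasibility of $\beta$ in the convex program~\eqref{eq:def-opti} together with a first-order expansion of $\Phi$. Without loss of generality the minimizing permutation is the identity (otherwise swap $\widehat{\mu}_1,\widehat{\mu}_2$). Set $m_y := \widehat{\beta}_\lambda^\top(\widehat{\mu}_0-\mu_y)$, $\sigma^2 := \widehat{\beta}_\lambda^\top\Sigma\widehat{\beta}_\lambda$, and $\Delta := \sqrt{\Delta_\mu^\top\Sigma^{-1}\Delta_\mu}$. Since $\widehat{\psi}_\lambda$ is a linear rule and conditional on $Y=y$ the scalar $(\widehat{\mu}_0-X)^\top\widehat{\beta}_\lambda$ is Gaussian with mean $m_y$ and variance $\sigma^2$, one gets $\Upsilon(\widehat{\psi}_\lambda)\le \tfrac12\Phi(m_1/\sigma)+\tfrac12\Phi(-m_2/\sigma)$ while $\Upsilon(\psi^\ast)=\Phi(-\Delta)$. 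The whole problem thus reduces to controlling $-m_1, m_2$ near $\Delta^2$ and $\sigma$ near $\Delta$.

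For this, the hypothesis $\epsilon\|\beta\|_1+\sqrt{\epsilon}\le\lambda$ is exactly what is needed to make the true $\beta=\Sigma^{-1}\Delta_\mu$ feasible for~\eqref{eq:def-opti}: using $\Sigma\beta=\Delta_\mu$ and $\|\widehat{\Delta}_\mu-\Delta_\mu\|_\infty\le\sqrt{\epsilon}$ (which follows from $\|\widehat{\mu}_y-\mu_y\|_\infty^2\le\epsilon$), one checks $\|\widehat{\Sigma}\beta-\widehat{\Delta}_\mu\|_\infty\le\|\widehat{\Sigma}-\Sigma\|_\infty\|\beta\|_1+\|\widehat{\Delta}_\mu-\Delta_\mu\|_\infty\le\epsilon\|\beta\|_1+\sqrt{\epsilon}\le\lambda$. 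Hence $\|\widehat{\beta}_\lambda\|_1\le\|\beta\|_1$ by minimality in~\eqref{eq:def-opti}. Writing $r:=\Sigma\widehat{\beta}_\lambda-\Delta_\mu=(\widehat{\Sigma}\widehat{\beta}_\lambda-\widehat{\Delta}_\mu)+(\Sigma-\widehat{\Sigma})\widehat{\beta}_\lambda+(\widehat{\Delta}_\mu-\Delta_\mu)$ and using feasibility of $\widehat{\beta}_\lambda$, the $\ell_1$ bound just obtained, and the $\ell_\infty$ estimation errors, I get $\|r\|_\infty\le\lambda+\epsilon\|\beta\|_1+\sqrt{\epsilon}$.

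The key observation is that even without any $\ell_2$ control on $\widehat{\beta}_\lambda-\beta$, the identity $\Sigma\beta=\Delta_\mu$ lets me bound the relevant bilinear forms directly. Dotting $\Sigma\widehat{\beta}_\lambda=\Delta_\mu+r$ with $\beta$ gives $|\widehat{\beta}_\lambda^\top\Delta_\mu-\Delta^2|=|\beta^\top r|\le\|\beta\|_1\|r\|_\infty$, and dotting with $\widehat{\beta}_\lambda$ gives $|\sigma^2-\widehat{\beta}_\lambda^\top\Delta_\mu|=|\widehat{\beta}_\lambda^\top r|\le \|\beta\|_1\|r\|_\infty$. Combined with the trivial $|\widehat{\beta}_\lambda^\top(\widehat{\mu}_0-\mu_0)|\le\|\beta\|_1\sqrt{\epsilon}$, the quantities $-m_1$ and $m_2$ are each at least $\Delta^2-\epsilon_1$ while $|\sigma^2-\Delta^2|\le\epsilon_2$, where the constants inside $\epsilon_1,\epsilon_2$ come out as in the statement after collecting the $\sqrt{\epsilon}$, $\lambda$, and $\epsilon\|\beta\|_1^2$ contributions.

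To finish, write $L(\widehat{\psi}_\lambda)=\tfrac12[\Phi(m_1/\sigma)-\Phi(-\Delta)]+\tfrac12[\Phi(-m_2/\sigma)-\Phi(-\Delta)]$ and apply the mean value theorem to each bracket: $|\Phi(\alpha)-\Phi(-\Delta)|\le\phi(\xi)\,|\alpha+\Delta|$ with $\xi$ between $\alpha$ and $-\Delta$. The bounds above show that $|\xi|\ge (\Delta^2-\epsilon_1)/\sqrt{\Delta^2+\epsilon_2}$ whenever this quantity is nonnegative; since $\phi$ is even and decreasing on $[0,\infty)$, the worst case produces $\phi\bigl(\max\bigl((\Delta^2-\epsilon_1)/\sqrt{\Delta^2+\epsilon_2},\,0\bigr)\bigr)$, matching the statement (the $\max$ taking over when $\Delta$ is too small to guarantee correct sign). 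The factor $(\epsilon_1+\epsilon_2)/\sqrt{\Delta^2}$ then arises from $|m_1/\sigma+\Delta|=|m_1+\Delta\sigma|/\sigma\le (|m_1+\Delta^2|+\Delta|\sigma-\Delta|)/\sigma$ together with $|\sigma-\Delta|=|\sigma^2-\Delta^2|/(\sigma+\Delta)\le \epsilon_2/\Delta$. The main obstacle is the careful bookkeeping: tracking every $\sqrt{\epsilon}$, $\lambda$, $\epsilon\|\beta\|_1^2$ and $\|\beta\|_1$ contribution so that the constants inside $\epsilon_1,\epsilon_2$ line up exactly as claimed.
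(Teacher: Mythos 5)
Your argument follows the paper's proof in all essentials: you reduce the overlap to Gaussian tail probabilities of the projected data, use the hypothesis $\epsilon\|\beta\|_1+\sqrt{\epsilon}\leq\lambda$ to show that $\beta$ is feasible for (\ref{eq:def-opti}) and hence $\|\widehat{\beta}_\lambda\|_1\leq\|\beta\|_1$, control the bilinear forms $\Delta_\mu^\top\widehat{\beta}_\lambda$ and $\widehat{\beta}_\lambda^\top\Sigma\widehat{\beta}_\lambda$ through the residual $\Sigma\widehat{\beta}_\lambda-\Delta_\mu$ (the paper does the identical telescoping, just written term by term), and finish with a mean-value bound on $\Phi$. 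The bookkeeping you defer does work out: using $\epsilon\|\beta\|_1\leq\lambda$, your intermediate bounds collapse into the stated $\epsilon_1$ and $\epsilon_2$.

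The one step that does not deliver the stated constant is the last one. You bound $|m_1/\sigma+\Delta|\leq(|m_1+\Delta^2|+\Delta|\sigma-\Delta|)/\sigma\leq(\epsilon_1+\epsilon_2)/\sigma$ and then replace $1/\sigma$ by $1/\Delta$; that requires $\sigma\geq\Delta$, whereas you only establish $\sigma^2\geq\Delta^2-\epsilon_2$, so as written your factor is $(\epsilon_1+\epsilon_2)/\sqrt{\Delta^2-\epsilon_2}$, which exceeds the claimed $(\epsilon_1+\epsilon_2)/\Delta$ and degenerates when $\epsilon_2\geq\Delta^2$. The paper avoids dividing by the random $\sigma$ in the increment: it first uses monotonicity of $\Phi$ to replace the argument $m_1/\sigma$ by the deterministic worst case $-(\Delta^2-\epsilon_1)/\sqrt{\Delta^2+\epsilon_2}$ (only the upper bound $\sigma\leq\sqrt{\Delta^2+\epsilon_2}$ is needed, since for $\Delta^2\geq\epsilon_1$ enlarging the denominator makes the argument less negative), and then applies the mean-value theorem between $-\Delta$ and that point, where the elementary inequality
\begin{equation*}
\Delta-\frac{\Delta^2-\epsilon_1}{\sqrt{\Delta^2+\epsilon_2}}=\frac{\Delta\sqrt{\Delta^2+\epsilon_2}-\Delta^2+\epsilon_1}{\sqrt{\Delta^2+\epsilon_2}}\leq\frac{\epsilon_1+\epsilon_2}{\Delta}
\end{equation*}
gives exactly the claimed factor. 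With that substitution in the final step, your proof coincides with the paper's.
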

Before giving the proof, we notice that the misclustering rate depends on 
$\Delta_\mu^\tp\Sigma^{-1}\Delta_\mu$ which can be regarded as the signal energy. 
\begin{proof}
Since the clustering error does not change upon flipping the labels assigned by $\widehat{\psi}_\lambda$, WLOG we assume $\pi$ is the identity permutation.

It is easily to verify that
$$
\Upsilon(\psi^\ast)=\Phi\rbr{-\frac{\Delta_\mu^\tp\beta}{\sqrt{\beta^\tp\Sigma\beta}}}=\Phi\rbr{-\sqrt{\Delta_\mu^\tp\Sigma^{-1}\Delta_\mu}}
$$
where $\Phi$ is the standard normal CDF.
Also,
$$
\Upsilon(\widehat{\psi}_\lambda)=\frac{1}{2}\Phi\rbr{-\frac{|\Delta_\mu^\tp\widehat{\beta}_\lambda|+|(\mu_0-\widehat{\mu}_0)^\tp\widehat{\beta}_\lambda|}{\sqrt{\widehat{\beta}_\lambda^\tp\Sigma\widehat{\beta}_\lambda}}}+\frac{1}{2}\Phi\rbr{-\frac{|\Delta_\mu^\tp\widehat{\beta}_\lambda|-|(\mu_0-\widehat{\mu}_0)^\tp\widehat{\beta}_\lambda|}{\sqrt{\widehat{\beta}_\lambda^\tp\Sigma\widehat{\beta}_\lambda}}}
$$
where the appearance of the absolute values is to account for the minimum over permutations in the definition of $\Upsilon$ -- the overlap must not change if $\widehat{\beta}_\lambda$ is negated.
So,
\begin{align}
L(\widehat{\psi}_\lambda)=\Upsilon(\widehat{\psi}_\lambda)-\Upsilon(\psi^\ast)
\leq
\Phi\rbr{-\frac{|\Delta_\mu^\tp\widehat{\beta}_\lambda|-|(\mu_0-\widehat{\mu}_0)^\tp\widehat{\beta}_\lambda|}{\sqrt{\widehat{\beta}_\lambda^\tp\Sigma\widehat{\beta}_\lambda}}} 
-\Phi\rbr{-\sqrt{\Delta_\mu^\tp\Sigma^{-1}\Delta_\mu}}. \label{eq:loss-gen-bound}
\end{align}

Clearly, $\|\mu_0-\widehat{\mu}_0\|_\infty\leq\sqrt{\epsilon}$ and $\|\Delta_\mu-\widehat{\Delta}_\mu\|_\infty\leq\sqrt{\epsilon}$.
Since $\Delta_\mu=\Sigma\beta$,
\begin{align*}
\|\widehat{\Sigma}\beta-\widehat{\Delta}_\mu\|_\infty
&\leq \|\widehat{\Sigma}\beta-\Delta_\mu\|_\infty + \|\Delta_\mu-\widehat{\Delta}_\mu\|_\infty \\
&\leq \|\widehat{\Sigma}\beta-\Sigma\beta\|_\infty + \sqrt{\epsilon}\\
&\leq \|\widehat{\Sigma}-\Sigma\|_\infty\|\beta\|_1 + \sqrt{\epsilon} \\
&\leq \epsilon\|\beta\|_1+\sqrt{\epsilon}\leq\lambda
\end{align*}
which implies that $\beta$ is a feasible point for the optimization problem (\ref{eq:def-opti}).
Hence, since $\widehat{\beta}_\lambda$ is an optimum for (\ref{eq:def-opti}), $\|\widehat{\beta}_\lambda\|_1\leq\|\beta\|_1$, and
$$
|(\mu_0-\widehat{\mu}_0)^\tp\widehat{\beta}_\lambda| \leq 
\|\mu_0-\widehat{\mu}_0\|_\infty \|\widehat{\beta}_\lambda\|_1
\leq \sqrt{\epsilon} \|\beta\|_1.
$$
Next,
\begin{align*}
|\Delta_\mu^\tp\widehat{\beta}_\lambda|&\geq
|\Delta_\mu^\tp\beta|-|\Delta_\mu^\tp(\widehat{\beta}_\lambda-\beta)| \\
&=\Delta_\mu^\tp\Sigma^{-1}\Delta_\mu-|\Delta_\mu^\tp(\widehat{\beta}_\lambda-\beta)|
\end{align*}
where
\begin{align*}
|\Delta_\mu^\tp(\widehat{\beta}_\lambda-\beta)|
&\leq |\widehat{\beta}_\lambda^\tp(\widehat{\Sigma}\beta-\Delta_\mu)|
+ |\beta^\tp(\widehat{\Sigma}\widehat{\beta}_\lambda-\Delta_\mu)| \\
&\leq \|\beta\|_1 \rbr{ \|\widehat{\Sigma}\beta-\Delta_\mu\|_\infty
+ \|\widehat{\Sigma}\widehat{\beta}_\lambda-\Delta_\mu\|_\infty } \\
&\leq \|\beta\|_1 \rbr{ \|\widehat{\Sigma}\beta-\widehat{\Delta}_\mu\|_\infty
+ \|\widehat{\Sigma}\widehat{\beta}_\lambda-\widehat{\Delta}_\mu\|_\infty +2\|\Delta_\mu-\widehat{\Delta}_\mu\|_\infty} \\
&\leq 2(\lambda+\sqrt{\epsilon})\|\beta\|_1
\end{align*}
i.e.
$$
\Delta_\mu^\tp\Sigma^{-1}\Delta_\mu - \rbr{
|\Delta_\mu^\tp\widehat{\beta}_\lambda|
- |(\mu_0-\widehat{\mu}_0)^\tp\widehat{\beta}_\lambda| }
\leq
(2\lambda+3\sqrt{\epsilon})\|\beta\|_1 \equiv\epsilon_1.
$$
And
\begin{align*}
\widehat{\beta}_\lambda^\tp\Sigma\widehat{\beta}_\lambda &\leq 
\beta^\tp\Sigma\beta + |\widehat{\beta}_\lambda^\tp\Sigma\widehat{\beta}_\lambda-\beta^\tp\Sigma\beta| \\
&\leq
\beta^\tp\Sigma\beta + |\widehat{\beta}_\lambda^\tp\Sigma\widehat{\beta}_\lambda-\widehat{\beta}_\lambda^\tp\Delta_\mu|+|(\widehat{\beta}_\lambda-\beta)^\tp\Delta_\mu| \\
&\leq
\Delta_\mu^\tp\Sigma^{-1}\Delta_\mu + \|\Sigma\widehat{\beta}_\lambda-\Delta_\mu\|_\infty\|\beta\|_1+2(\lambda+\sqrt{\epsilon})\|\beta\|_1
\end{align*}
where
\begin{align}
\|\Sigma\widehat{\beta}_\lambda-\Delta_\mu\|_\infty
&\leq 
\|\Sigma\widehat{\beta}_\lambda-\widehat{\Sigma}\widehat{\beta}_\lambda\|_\infty+\|\widehat{\Sigma}\widehat{\beta}_\lambda-\widehat{\Delta}_\mu\|_\infty+\|\widehat{\Delta}_\mu-\Delta_\mu\|_\infty \nonumber\\
&\leq \|\Sigma-\widehat{\Sigma}\|_\infty\|\beta\|_1+\lambda+\sqrt{\epsilon} \nonumber\\
&\leq \epsilon\|\beta\|_1+\lambda+\sqrt{\epsilon}
\label{eqn}
\end{align}
so
$$
\widehat{\beta}_\lambda^\tp\Sigma\widehat{\beta}_\lambda
- 
\Delta_\mu^\tp\Sigma^{-1}\Delta_\mu
\leq
 \epsilon\|\beta\|_1^2 + 3(\lambda+\sqrt{\epsilon})\|\beta\|_1 \equiv\epsilon_2.
$$

Combining these with (\ref{eq:loss-gen-bound}),
\begin{align*}
L(\widehat{\psi}_\lambda)&\leq
\Phi\rbr{-\frac{\Delta_\mu^\tp\Sigma^{-1}\Delta_\mu-\epsilon_1}{\sqrt{\Delta_\mu^\tp\Sigma^{-1}\Delta_\mu+\epsilon_2}}} 
-\Phi\rbr{-\sqrt{\Delta_\mu^\tp\Sigma^{-1}\Delta_\mu}} \\
&\leq
\phi\rbr{\max\rbr{\frac{\Delta_\mu^\tp\Sigma^{-1}\Delta_\mu-\epsilon_1}{\sqrt{\Delta_\mu^\tp\Sigma^{-1}\Delta_\mu+\epsilon_2}},\; 0}}
\rbr{
\sqrt{\Delta_\mu^\tp\Sigma^{-1}\Delta_\mu}
- \frac{\Delta_\mu^\tp\Sigma^{-1}\Delta_\mu-\epsilon_1}{\sqrt{\Delta_\mu^\tp\Sigma^{-1}\Delta_\mu+\epsilon_2}}} \\
&\leq
\phi\rbr{\max\rbr{\frac{\Delta_\mu^\tp\Sigma^{-1}\Delta_\mu-\epsilon_1}{\sqrt{\Delta_\mu^\tp\Sigma^{-1}\Delta_\mu+\epsilon_2}},\; 0}}  \frac{
\epsilon_1+\epsilon_2 }{\sqrt{\Delta_\mu^\tp\Sigma^{-1}\Delta_\mu}}.
\end{align*}
\end{proof}

The following result from \cite{hardt2014mixture} provides us $\ell_\infty$
control over the GMM parameters. 

\begin{theorem}[Hardt and Price \cite{hardt2014mixture}]
Given $\epsilon,\delta>0$ and $n$ samples from the model (A1), if
$$n=O\rbr{\frac{1}{\epsilon^{6}}\log\rbr{\frac{d}{\delta}\log\rbr{\frac{1}{\epsilon}}}},$$
then, with probability at least $1-\delta$, Algorithm \textsc{GMFitHardtPrice} defined in Section~\ref{sec:algo-hardt} 
 produces parameter estimates $\widehat{\mu}_1,\widehat{\mu}_2$ and $\widehat{\Sigma}$ such that, for some permutation $\pi:\{1,2\}\rightarrow\{1,2\}$,
$$
\max\rbr{\max_{i=1,2}\rbr{\|\mu_i-\widehat{\mu}_{\pi(i)}\|_\infty^2},\;\|\Sigma-\widehat{\Sigma}\|_\infty}\leq\epsilon\rbr{ \frac{1}{4}\|\mu_1-\mu_2\|_\infty^2+ \|\Sigma_1\|_\infty}.
$$
\label{thm:gmm}
\end{theorem}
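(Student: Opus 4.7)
The plan is to reduce the $d$-dimensional estimation problem to a collection of at most $O(d^2)$ univariate and bivariate problems solved by \textsc{GMFitLowDim}, and combine their guarantees via a union bound. With the choice $\epsilon^\ast=\epsilon/20$ and $\delta^\ast=\delta/(10d^2)$ prescribed by the algorithm, the total failure probability across all low-dimensional invocations is at most $\delta$, and the per-invocation sample complexity matches the stated bound because $n=O((\epsilon^\ast)^{-6}\log((1/\delta^\ast)\log(1/\epsilon^\ast)))=O(\epsilon^{-6}\log((d/\delta)\log(1/\epsilon)))$.

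Conditioned on all these low-dimensional invocations succeeding, I would verify each stage of \textsc{GMFitHardtPrice} in turn. First, for each coordinate $i$, the univariate call in step 2 produces $(\xi_1(i),\xi_2(i))$ matching the coordinate-projected true means up to $\sqrt{\epsilon^\ast}$ times the local scale factor, while the empirical variance $\widehat{V}$ concentrates around the maximum marginal variance of the mixture by a standard Bernstein-type argument. The pivot-selection step then splits into two regimes. If $|\xi_1(i)-\xi_2(i)|\leq\epsilon\widehat{V}/4$ for every $i$, then $\|\mu_1-\mu_2\|_\infty$ must itself be $O(\epsilon\widehat{V})$, so simply returning $\widehat{\mu}_1=\widehat{\mu}_2=\xi_1$ already satisfies the target accuracy because the right-hand side factor $\tfrac14\|\mu_1-\mu_2\|_\infty^2+\|\Sigma\|_\infty$ absorbs the incurred gap.

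The bivariate mean estimation and label alignment performed in step 4 is the crux of the argument and I expect it to be the main obstacle. For each $j\neq i$, \textsc{GMFitLowDim} on the pairs $(X(i),X(j))$ returns two unordered bivariate means $(\nu_1(i),\nu_1(j))$ and $(\nu_2(i),\nu_2(j))$, each within $\sqrt{\epsilon^\ast}$ times a scale factor of the corresponding true projected mean. To decide which $\nu_k$ corresponds to $\mu_1$ versus $\mu_2$, I would use the pivot coordinate: because $|\xi_1(i)-\xi_2(i)|>\epsilon\widehat{V}/4$ by construction, and because the bivariate estimation error along coordinate $i$ is bounded by $\sqrt{\epsilon^\ast}$ times the same scale factor, a triangle inequality shows that exactly one of $|\xi_1(i)-\nu_k(i)|\leq\epsilon\widehat{V}/10$ holds, provided the numerical constants ($20$ in $\epsilon^\ast$ and $10$ in the threshold) are tuned so that the worst-case slack is strictly smaller than the pivot gap. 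This alignment is what makes the per-coordinate labels globally consistent across different $j$'s, and without it the returned vectors $\widehat{\mu}_1,\widehat{\mu}_2$ could be an arbitrary scrambling of entries from both true means.

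Finally, the covariance stage is more direct: diagonal entries $\widehat{\Sigma}(i,i)$ are returned by univariate \textsc{GMFitLowDim} and off-diagonal entries $\widehat{\Sigma}(i,j)$ by bivariate invocations, each accurate to within $\epsilon^\ast(\tfrac14\|\mu_1-\mu_2\|_\infty^2+\|\Sigma\|_\infty)$ of the truth by the low-dimensional guarantee applied to the appropriate submixture. Taking a maximum over all $O(d^2)$ coordinate pairs yields the stated $\ell_\infty$ bound once the factor of $20$ in $\epsilon^\ast$ absorbs the accumulated slack from the alignment analysis. The overall proof is essentially a careful bookkeeping exercise; the subtle points are keeping the scale factor multiplicative rather than additive in the error, handling the degenerate case $\mu_1\approx\mu_2$ separately so the algorithm does not spuriously declare failure, and verifying that the bivariate subproblems inherit a scale factor comparable to the global $\|\Sigma\|_\infty$ rather than a strictly larger one.
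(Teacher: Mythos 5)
The paper itself gives no proof of this theorem: it is quoted verbatim from Hardt and Price \cite{hardt2014mixture}, so there is no internal argument to compare yours against. Judged on its own terms, your outline is the right reconstruction of their argument and matches the reduction that Algorithm \textsc{GMFitHardtPrice} implements: a union bound over the $O(d^2)$ univariate and bivariate invocations with $\delta^\ast=\delta/(10d^2)$, the observation that $\log(10d^2/\delta)=O(\log(d/\delta))$ so the per-call sample complexity collapses to the stated bound, the separate treatment of the degenerate case where all coordinate-wise mean gaps are small, the pivot-based label alignment in step 4, and the entrywise covariance assembly. You also correctly identify the alignment step as the only place where something could genuinely go wrong, since a misalignment at even one $j$ would scramble entries of $\widehat{\mu}_1$ and $\widehat{\mu}_2$.

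Two caveats keep this a sketch rather than a proof. First, everything is conditional on a guarantee for \textsc{GMFitLowDim} that is never written down precisely (neither here nor in the paper, which deliberately black-boxes it); until you fix whether the low-dimensional mean error is bounded by $\sqrt{\epsilon^\ast}$ times the square root of the local scale $\tfrac14\|\mu_1-\mu_2\|_\infty^2+\|\Sigma\|_\infty$ restricted to the coordinates in play, the constant-chasing cannot actually be carried out. Second, the alignment step needs both directions: that \emph{some} $k$ satisfies $|\xi_1(i)-\nu_k(i)|\leq\epsilon\widehat{V}/10$ (soundness, from the triangle inequality on two estimates of $\mu_1(i)$) and that the \emph{other} candidate fails it (exclusivity), which requires the pivot gap $\epsilon\widehat{V}/4$ to exceed the threshold plus twice the worst-case estimation error; you assert the constants can be tuned but do not verify the inequality, and this is exactly where the relation between $\widehat{V}$ (an empirical variance) and the scale factor appearing in the low-dimensional error bound has to be pinned down. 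These are bookkeeping gaps, not conceptual ones, and the overall strategy is the same as in the source.
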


Combining Proposition~\ref{prop:main} and Theorem~\ref{thm:gmm}, we have the 
following result under (A2) and (A4).
We defer the proof to the supplement.

\begin{corollary}\label{cor:nclerr}
Assume (A1), (A2), (A3), $\|\mu_1-\mu_2\|_\infty^2<D$ and $\|\Sigma\|_2\leq D_0$.
Given $\delta>0$, there is some constant $c_1$ such that, setting 
$$
\lambda=
c_1\left(\frac{\log(dn/\delta)}{n}\right)^{1/6}
\frac{\sqrt{D_0s (\Delta_\mu^\tp\Sigma^{-1}\Delta_\mu)}}{\eta}
+\sqrt{c_1}\left(\frac{\log(dn/\delta)}{n}\right)^{1/12},
$$
with probability at least $1-\delta$,
\begin{align*}
L(\widehat{\psi}_\lambda) 
\leq&
C_0
\phi \rbr{\sqrt{\frac{\Delta_\mu^\tp\Sigma^{-1}\Delta_\mu}{6}}}
\max\rbr{
\frac{s\sqrt{\Delta_\mu^\tp\Sigma^{-1}\Delta_\mu}}{\eta^2}
\left(\frac{\log(dn/\delta)}{n}\right)^{1/6}
,\,
 \frac{\sqrt{s}}{\eta}
\left(\frac{\log(dn/\delta)}{n}\right)^{1/12} }
\end{align*}
for some constant $C_0$.
\end{corollary}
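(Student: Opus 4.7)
The plan is to chain Theorem~\ref{thm:gmm} into Proposition~\ref{prop:main} by translating the error parameter $\epsilon$ of the mixture-learning step into the value of $\lambda$ prescribed in the corollary, and then simplify the resulting bound using assumptions (A2) and (A3).

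First I would invoke Theorem~\ref{thm:gmm} with an $\epsilon'$ chosen so that $n = O(\epsilon'^{-6} \log(d\log(1/\epsilon')/\delta))$ is met, which inverts to $\epsilon' = \Theta\bigl((\log(dn/\delta)/n)^{1/6}\bigr)$. Using $\|\mu_1-\mu_2\|_\infty^2 \le D$ and $\|\Sigma\|_\infty \le \|\Sigma\|_2 \le D_0$ (the elementwise infinity norm of a PSD matrix is bounded by its spectral norm), the conclusion of Theorem~\ref{thm:gmm} gives, with probability $1-\delta$, an $\ell_\infty$ error $\epsilon \le (D/4 + D_0)\epsilon' \lesssim \epsilon'$. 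Next I would control $\|\beta\|_1$ using (A2) and (A3): since $\beta = \Sigma^{-1}\Delta_\mu$ is $s$-sparse on $S$, it satisfies $\|\beta_{S^c}\|_1 \le \|\beta_S\|_1$ trivially, so (A3) applies and yields $\|\beta\|_2 \le \|\Sigma\beta\|_2/\eta = \|\Delta_\mu\|_2/\eta$. Combined with $\|\Delta_\mu\|_2^2 = \beta^\tp\Sigma^2\beta \le \|\Sigma\|_2 \cdot (\Delta_\mu^\tp\Sigma^{-1}\Delta_\mu)$ and Cauchy--Schwarz $\|\beta\|_1 \le \sqrt{s}\|\beta\|_2$, this gives the key estimate
\[
\|\beta\|_1 \le \frac{\sqrt{s D_0 (\Delta_\mu^\tp\Sigma^{-1}\Delta_\mu)}}{\eta}.
\]

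With this bound in hand, I would check that the prescribed $\lambda$ dominates $\epsilon\|\beta\|_1 + \sqrt{\epsilon}$, which is exactly the feasibility condition required by Proposition~\ref{prop:main}: substituting the $\|\beta\|_1$ bound, the two additive terms in $\lambda$ mirror the two additive terms $\epsilon\|\beta\|_1$ and $\sqrt{\epsilon}$ with the right dependence on $\epsilon \asymp (\log(dn/\delta)/n)^{1/6}$. Then Proposition~\ref{prop:main} applies. Now I would compute $\epsilon_1 + \epsilon_2 = (2\lambda+3\sqrt{\epsilon})\|\beta\|_1 + \epsilon\|\beta\|_1^2 + 3(\lambda+\sqrt{\epsilon})\|\beta\|_1$ and, dividing by $\sqrt{\Delta_\mu^\tp\Sigma^{-1}\Delta_\mu}$, show the two dominant contributions are $\epsilon \cdot s (\Delta_\mu^\tp\Sigma^{-1}\Delta_\mu)/(\eta^2 \sqrt{\Delta_\mu^\tp\Sigma^{-1}\Delta_\mu})$ and $\sqrt{\epsilon s}/\eta$, recovering the max of the two terms in the corollary's right-hand side after substituting $\epsilon \asymp (\log(dn/\delta)/n)^{1/6}$.

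Finally, I would bound the $\phi$ factor. Since $\phi$ is decreasing on the positive real line, it suffices to show
\[
\frac{\Delta_\mu^\tp\Sigma^{-1}\Delta_\mu - \epsilon_1}{\sqrt{\Delta_\mu^\tp\Sigma^{-1}\Delta_\mu + \epsilon_2}} \;\ge\; \sqrt{\frac{\Delta_\mu^\tp\Sigma^{-1}\Delta_\mu}{6}},
\]
which, upon squaring, reduces to showing $\epsilon_1$ and $\epsilon_2$ are at most a small absolute-constant fraction of $\Delta_\mu^\tp\Sigma^{-1}\Delta_\mu$; this holds once $n$ is sufficiently large relative to $s$, $\eta$, $D_0$, and $\Delta_\mu^\tp\Sigma^{-1}\Delta_\mu$, a regime implicit in the corollary (and which is in any case the only regime where the stated bound is nontrivial). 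The main obstacle is really accounting in this last step: the algebra tying $\|\beta\|_1$ to $\lambda$ then to $\epsilon_1, \epsilon_2$ produces several cross terms, and one must verify that each is dominated by one of the two terms appearing in the corollary's $\max$, while simultaneously keeping them small enough for the $\phi$ argument to exceed $\sqrt{\Delta_\mu^\tp\Sigma^{-1}\Delta_\mu/6}$. Once these are tracked carefully, the corollary follows directly from Proposition~\ref{prop:main} by substitution.
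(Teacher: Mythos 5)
Your proposal is correct and follows essentially the same route as the paper's proof: invoke Theorem~\ref{thm:gmm} with $\epsilon\asymp(\log(dn/\delta)/n)^{1/6}$, bound $\|\beta\|_1\leq\sqrt{D_0 s(\Delta_\mu^\tp\Sigma^{-1}\Delta_\mu)}/\eta$ via (A2), (A3) and $\|\Sigma\|_2\leq D_0$, verify feasibility of $\lambda$, and then track $\epsilon_1+\epsilon_2$ into the two terms of the max. Your handling of the $\phi$ factor (requiring $\epsilon_1,\epsilon_2$ to be a small fraction of $\Delta_\mu^\tp\Sigma^{-1}\Delta_\mu$, with the small-$n$ regime dismissed because the bound is then trivial) matches the paper's closing remark that $L(\widehat{\psi}_\lambda)\leq 1$ always, so the claim holds for all $n$ once $C_0$ is large enough.
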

\begin{proof}
Applying Theorem~\ref{thm:gmm} with $\epsilon = (\log(dn/\delta)/n)^{1/6}$, since $\|\Sigma\|_\infty\leq\|\Sigma\|_2\leq D_0$, we have, for some constant $c_1$ which depends on $D$ and $D_0$,  with probability at least $1-\delta$
$$
\max\rbr{\|\mu_1-\widehat{\mu}_{\pi(1)}\|_\infty^2,\|\mu_2-\widehat{\mu}_{\pi(2)}\|_\infty^2,\|\Sigma-\widehat{\Sigma}\|_\infty} = c_1\left(\frac{\log(dn/\delta)}{n}\right)^{1/6}=:\epsilon_0.
$$

For concision, let $\rho=\Delta_\mu^\tp\Sigma^{-1}\Delta_\mu$.
From Proposition~\ref{prop:main}, if $\epsilon_0\|\beta\|_1+\sqrt{\epsilon_0}\leq\lambda$, then
\begin{align*}
L(\widehat{\psi}_\lambda)
&\leq
\phi\rbr{\max\rbr{\frac{\rho-\epsilon_1}{\sqrt{\rho+\epsilon_2}},\; 0}}  \frac{
\epsilon_1+\epsilon_2 }{\sqrt{\rho}}
\end{align*}
where $\epsilon_1=(2\lambda+3\sqrt{\epsilon_0})\|\beta\|_1$ and $\epsilon_2=\epsilon_0\|\beta\|_1^2 + 3(\lambda+\sqrt{\epsilon_0})\|\beta\|_1$.
Now, using (A2), (A3), and $\|\Sigma\|_2\leq D_0$, we have
$$
\|\beta\|_1
\leq \sqrt{s}\|\beta\|_2
\leq\frac{\sqrt{s}}{\eta}\|\Sigma\beta\|_2
\leq\frac{\sqrt{D_0s}}{\eta}\sqrt{\rho}.
$$
Taking $\lambda=\epsilon_0\frac{\sqrt{D_0s\rho}}{\eta}+\sqrt{\epsilon_0}$, we have that the condition in Proposition~\ref{prop:main} is satisfied.
Also, defining $\omega=\frac{\epsilon_0D_0s}{\eta^2}$,
$$
\epsilon_1
\leq (2\lambda+3\sqrt{\epsilon_0}) \frac{\sqrt{D_0s}}{\eta}\sqrt{\rho}
= 2\omega\rho+5\sqrt{\omega\rho}
$$
and
$$
\epsilon_2
\leq \epsilon_0\rbr{\frac{\sqrt{D_0s}}{\eta}\sqrt{\rho}}^2 + 3(\lambda+\sqrt{\epsilon_0})\frac{\sqrt{D_0s}}{\eta}\sqrt{\rho}
= 4\omega\rho + 6\sqrt{\omega\rho}
$$
Thus
\begin{align*}
L(\widehat{\psi}_\lambda) 
&\leq
\phi\rbr{\max\rbr{\frac{\rho-\epsilon_1}{\sqrt{\rho+\epsilon_2}},\; 0}}
\rbr{
6\sqrt{\omega\rho}+11
} \sqrt{\omega}.
\end{align*}
The ratio in $\phi$ is lower bounded by $\sqrt{\rho/6}$ as long as $n$ is sufficiently large so that $\epsilon_1\leq\epsilon_2\leq\rho/2$, but since $L(\widehat{\psi}_\lambda)\leq 1$ always, the result follows for all $n$ assuming $C_0$ sufficiently large.
\end{proof}

\paragraph{Remark:}
Hence it follows that $n=\Omega\rbr{s^6\log(d)}$, suppressing the dependence on other parameters.

\subsection{Recovery of relevant features}

We derive a bound for $\|\beta-\widehat{\beta}_\lambda\|_\infty$ under (A3) and then 
guarantee recovery of relevant features under (A4), i.e. when the non-zero components of $\beta$ are large enough. 
\begin{theorem} Assume the conditions of Proposition~\ref{prop:main} hold. 
We have
$$
\|\beta- \widehat{\beta}_\lambda\|_\infty \leq \frac{2\lambda\sqrt{s}}{\eta}
$$
If, in addition, (A4) holds and $\eta > 2/c$ then 
$$
\widehat S = S.
$$
\end{theorem}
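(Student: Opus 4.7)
Let $v := \widehat{\beta}_\lambda - \beta$. The argument splits into the $\ell_\infty$-estimation bound and then a short one-line argument for each coordinate of the support.

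\textbf{Estimation bound.} I would first re-use the feasibility calculation from the proof of Proposition~\ref{prop:main}, which shows that $\beta$ itself satisfies the constraint in \eqref{eq:def-opti}. The $\ell_1$-optimality of $\widehat{\beta}_\lambda$ then yields $\|\widehat{\beta}_\lambda\|_1 \leq \|\beta\|_1$; splitting this across $S$ and $S^c$ (using $\beta_{S^c}=0$) gives the cone condition $\|v_{S^c}\|_1 \leq \|v_S\|_1$, so in particular $\|v\|_1 \leq 2\|v_S\|_1 \leq 2\sqrt{s}\|v\|_2$. Feasibility of both $\beta$ and $\widehat{\beta}_\lambda$ combined with the triangle inequality gives $\|\widehat{\Sigma} v\|_\infty \leq 2\lambda$, and replacing $\widehat{\Sigma}$ by $\Sigma$ costs only the additional slack $\epsilon \|v\|_1$, which is dominated by $\lambda$ for sufficiently large $n$. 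Now I would apply (A3) to the cone vector $v$ to get $\eta\|v\|_2 \leq \|\Sigma v\|_2$, use PSDness of $\Sigma$ to pass from $\|\Sigma v\|_2$ to $v^\top \Sigma v$, and close the loop via H\"older's inequality $v^\top \Sigma v \leq \|v\|_1\,\|\Sigma v\|_\infty \leq 2\lambda \cdot 2\sqrt{s}\|v\|_2$. This yields $\|v\|_2 = O\!\left(\lambda\sqrt{s}/\eta\right)$, and passing to $\|v\|_\infty \leq \|v\|_2$ delivers the claimed bound $\|v\|_\infty \leq 2\lambda\sqrt{s}/\eta$ (after tracking the constants carefully).

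\textbf{Support recovery.} Given the $\ell_\infty$-bound, the rest is immediate by triangle inequality on each coordinate. If $i \notin S$, then $\beta(i)=0$, so $|\widehat{\beta}_\lambda(i)| \leq 2\lambda\sqrt{s}/\eta$, which is strictly less than the threshold $c\lambda\sqrt{s}$ precisely under the stated hypothesis $\eta > 2/c$, so $i \notin \widehat{S}$. If $i \in S$, then $\widehat{\beta}_\lambda(i) \geq \beta(i) - 2\lambda\sqrt{s}/\eta \geq \beta_{\min} - 2\lambda\sqrt{s}/\eta$; combining (A4) with the scaling of $\lambda$ prescribed in Corollary~\ref{cor:nclerr} (so that $\lambda\sqrt{s}$ scales as $s(\log(dn/\delta)/n)^{1/6}$ up to constants), $\beta_{\min}$ dominates $(c + 2/\eta)\lambda\sqrt{s}$ for sufficiently large $n$, placing $i \in \widehat{S}$. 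Hence $\widehat{S} = S$.

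\textbf{Main obstacle.} The delicate step is extracting the clean constant in the $\sqrt{s}/\eta$ factor from (A3) as written: the restricted eigenvalue directly controls $\|\Sigma v\|_2$ rather than the quadratic form $v^\top \Sigma v$, and it is only via PSDness of $\Sigma$ that these can be interchanged with the right dependence on $\eta$ (versus $\eta^2$ scaled by $\|\Sigma\|_{\mathrm{op}}$). Secondary bookkeeping is needed to confirm that the $\epsilon\|v\|_1$ perturbation from swapping $\widehat{\Sigma}$ for $\Sigma$ can be absorbed into the $2\lambda$ slack once the cone condition is used to bound $\|v\|_1$ by $\sqrt{s}\|v\|_2$; this closes a mild fixed-point argument and is where the implicit large-$n$ requirement enters.
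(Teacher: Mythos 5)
Your skeleton matches the paper's: the cone condition from $\ell_1$-optimality of $\widehat{\beta}_\lambda$ and feasibility of $\beta$, an $\ell_\infty$ bound $\|\Sigma(\widehat{\beta}_\lambda-\beta)\|_\infty\leq 2\lambda$ (the paper obtains this directly from Eq.~(\ref{eqn}) rather than passing through $\widehat{\Sigma}$, but that difference is cosmetic), then the restricted eigenvalue condition, and finally the coordinatewise thresholding argument, which you carry out exactly as the paper intends. The genuine gap is in the central estimation step. Writing $v=\widehat{\beta}_\lambda-\beta$, you convert $\|\Sigma v\|_\infty\leq 2\lambda$ into an $\ell_2$ bound via $v^\tp\Sigma v\leq\|v\|_1\|\Sigma v\|_\infty\leq 4\lambda\sqrt{s}\|v\|_2$ together with $\|\Sigma v\|_2^2\leq\|\Sigma\|_2\,v^\tp\Sigma v$. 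Chained with (A3) this yields $\|v\|_2\leq 4\lambda\sqrt{s}\|\Sigma\|_2/\eta^2$, not the claimed $2\lambda\sqrt{s}/\eta$. Your own ``main obstacle'' paragraph concedes that the dependence you get is $\eta^2$ scaled by the operator norm, and the phrase ``after tracking the constants carefully'' does not close that gap: no amount of constant-tracking turns $\|\Sigma\|_2/\eta^2$ into $1/\eta$ from your inequality chain. This propagates downstream: with your bound, ruling out the off-support coordinates requires $4\|\Sigma\|_2/\eta^2<c$ rather than the theorem's hypothesis $\eta>2/c$, so the support-recovery claim as stated is not established either.

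For comparison, the paper secures the $1/\eta$ dependence by the chain $\|v\|_2\leq\frac{2\lambda}{\|\Sigma v\|_\infty}\|v\|_2\leq\frac{2\lambda\sqrt{s}}{\|\Sigma v\|_2}\|v\|_2\leq\frac{2\lambda\sqrt{s}}{\eta}$, whose middle step invokes $\|\Sigma v\|_2\leq\sqrt{s}\|\Sigma v\|_\infty$. That inequality holds only if $\Sigma v$ is supported on $s$ coordinates, which is not guaranteed here, so your instinct that this is the delicate point is well founded; but your resolution proves a quantitatively different statement rather than the one asserted. To actually match the theorem you would need either to justify the paper's $\|\Sigma v\|_2\leq\sqrt{s}\|\Sigma v\|_\infty$ step, or to work with a version of (A3) that lower-bounds the quadratic form $v^\tp\Sigma v/\|v\|_2^2$ on the cone, in which case your basic-inequality argument goes through cleanly with the correspondingly restated constant and threshold condition.
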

\begin{proof}
We first establish two results that are crucial. First, 
if $\beta$ is sparse and $S$ denotes the support of $\beta$, then
\[
\|(\beta - \widehat{\beta}_\lambda)_{S^c}\|_1 = \|\widehat{\beta}_{\lambda,{S^c}}\|_1
= \|\widehat{\beta}_{\lambda}\|_1 - \|\widehat{\beta}_{\lambda,{S}}\|_1
\leq \|\beta\|_1 - \|\widehat{\beta}_{\lambda,{S}}\|_1
 = \|\beta_S\|_1 - \|\widehat{\beta}_{\lambda,{S}}\|_1
\leq \|(\beta - \widehat{\beta}_\lambda)_{S}\|_1
\]

Second, note that
\[
\|\Sigma(\beta-\widehat{\beta}_\lambda)\|_\infty 
= \|\Delta_\mu - \Sigma\widehat{\beta}_\lambda\|_\infty \leq 2 \lambda
\]
using Eq.~(\ref{eqn}).

Therefore, we can write
\begin{eqnarray*}
\|\beta- \widehat{\beta}_\lambda\|_\infty \leq \|\beta- \widehat{\beta}_\lambda\|_2 
& \leq & \frac{2\lambda}{\|\Sigma(\beta-\widehat{\beta}_\lambda)\|_\infty}\|\beta- \widehat{\beta}_\lambda\|_2
\leq \frac{2\lambda\sqrt{s}}{\|\Sigma(\beta-\widehat{\beta}_\lambda)\|_2}\|\beta- \widehat{\beta}_\lambda\|_2\\
& \leq & \frac{2\lambda\sqrt{s}}{\min_v \{\|\Sigma v\|_2/\|v\|_2:
\|v_{S^c}\|_1 \leq \|v_S\|_1\}} \leq \frac{2\lambda\sqrt{s}}{\eta}
\end{eqnarray*}

The result follows using (A4). 
\end{proof}

\section{Discussion and Open questions}
\label{sec:concl}
The primary goal of this paper was to demonstrate a method for
high-dimensional clustering which, in contrast to existing work,
provably identifies relevant features that cannot be detected by
marginal (coordinate-wise) feature selection. The method we present
is computationally feasible and statistically efficient with sample
complexity that primarily depends on the number of relevant
dimensions, and only logarithmically on the total number of features.
However, this goal was achieved by considering a very simple model -
a mixture of two non-spherical Gaussians with same covariance and
mixture weights. Extensions to allow uneven mixture weights, more than
two components and different covariance matrices are all crucial to make
the method practical, and will be the topic of a subsequent publication.
Theoretically, the bounds we have demonstrated can be tightened in a few places,
particularly for support recovery using a primal-dual witness argument.
Additionally, it will be interesting to demonstrate matching lower bounds
for this problem to establish optimality of the sample complexity.



\section*{Acknowledgements}
This research is supported in part by NSF awards IIS-1116458 and CAREER IIS-1252412.

\bibliographystyle{unsrt}
\bibliography{references}

\begin{thebibliography}{10}

\bibitem{sun2012regularized}
Wei Sun, Junhui Wang, and Yixin Fang.
\newblock Regularized k-means clustering of high-dimensional data and its
  asymptotic consistency.
\newblock {\em Electronic Journal of Statistics}, 6:148--167, 2012.

\bibitem{witten2010framework}
Daniela~M Witten and Robert Tibshirani.
\newblock A framework for feature selection in clustering.
\newblock {\em Journal of the American Statistical Association}, 105(490),
  2010.

\bibitem{guo2010pairwise}
Jian Guo, Elizaveta Levina, George Michailidis, and Ji~Zhu.
\newblock Pairwise variable selection for high-dimensional model-based
  clustering.
\newblock {\em Biometrics}, 66(3):793--804, 2010.

\bibitem{andrews13selection}
Jeffrey~L Andrews and Paul~D McNicholas.
\newblock Variable selection for clustering and classification.
\newblock {\em ArXiv e-prints 1303.5294}, March 2013.

\bibitem{chang2014sparse}
Xiangyu {Chang}, Yu~{Wang}, Rongjian {Li}, and Zongben {Xu}.
\newblock {Sparse K-Means with $\ell_\infty/\ell_0$ Penalty for
  High-Dimensional Data Clustering}.
\newblock {\em ArXiv e-prints 1403.7890}, March 2014.

\bibitem{hall2010clustering}
Yao-ban Chan and Peter Hall.
\newblock Using evidence of mixed populations to select variables for
  clustering very high-dimensional data.
\newblock {\em Journal of the American Statistical Association}, 105(490),
  2010.

\bibitem{brubaker2008isotropic}
S~Charles Brubaker and Santosh~S Vempala.
\newblock Isotropic pca and affine-invariant clustering.
\newblock In {\em Building Bridges}, pages 241--281. Springer, 2008.

\bibitem{hardt2014mixture}
Moritz {Hardt} and Eric {Price}.
\newblock {Sharp bounds for learning a mixture of two gaussians}.
\newblock {\em ArXiv e-prints 1404.4997}, April 2014.

\bibitem{maugis2009variable}
Cathy Maugis, Gilles Celeux, and Marie-Laure Martin-Magniette.
\newblock Variable selection for clustering with gaussian mixture models.
\newblock {\em Biometrics}, 65(3):701--709, 2009.

\bibitem{lee2012variable}
Hyangmin Lee and Jia Li.
\newblock Variable selection for clustering by separability based on
  ridgelines.
\newblock {\em Journal of Computational and Graphical Statistics},
  21(2):315--337, 2012.

\bibitem{raftery2006variable}
Adrian~E Raftery and Nema Dean.
\newblock Variable selection for model-based clustering.
\newblock {\em Journal of the American Statistical Association},
  101(473):168--178, 2006.

\bibitem{pan2007penalized}
Wei Pan and Xiaotong Shen.
\newblock Penalized model-based clustering with application to variable
  selection.
\newblock {\em The Journal of Machine Learning Research}, 8:1145--1164, 2007.

\bibitem{michel2014sparse}
Stephane {Gaiffas} and Bertrand Michel.
\newblock {Sparse Bayesian Unsupervised Learning}.
\newblock {\em ArXiv e-prints 1401.8017}, January 2014.

\bibitem{maugis2008non}
Cathy Maugis and Bertrand Michel.
\newblock A non asymptotic penalized criterion for gaussian mixture model
  selection.
\newblock {\em ESAIM: Probability and Statistics}, 15:41--68, January 2011.

\bibitem{maugis2008slope}
Cathy Maugis and Bertrand Michel.
\newblock Slope heuristics for variable selection and clustering via gaussian
  mixtures.
\newblock {\em Technical Report 6550, INRIA}, 2008.

\bibitem{Akshay_GMM}
Akshay Krishnamurthy.
\newblock High-dimensional clustering with sparse gaussian mixture models.
\newblock Available at
  \url{http://www.cs.cmu.edu/~akshaykr/files/sgmm_paper.pdf}.

\bibitem{GMMinvCov}
Anani Lotsi and Ernst Wit.
\newblock High dimensional sparse gaussian graphical mixture model.
\newblock http://arxiv.org/abs/1308.3381.

\bibitem{GMMinvCov2}
Ruan L., Yuan M., and Zou H.
\newblock Regularized parameter estimation in high-dimensional gaussian mixture
  models.
\newblock {\em Neural Computation}, 23:1605--1622, 2011.

\bibitem{azizyan2013minimax}
Martin Azizyan, Aarti Singh, and Larry Wasserman.
\newblock Minimax theory for high-dimensional gaussian mixtures with sparse
  mean separation.
\newblock In {\em Advances in Neural Information Processing Systems}, pages
  2139--2147, 2013.

\bibitem{arias2014sparse-mixture}
Ery {Arias-Castro} and Nicolas {Verzelen}.
\newblock {Detection and Feature Selection in Sparse Mixture Models}.
\newblock {\em ArXiv e-prints 1405.1478}, May 2014.

\bibitem{cai2011sparse-discriminant}
Tony Cai and Weidong Liu.
\newblock A direct estimation approach to sparse linear discriminant analysis.
\newblock {\em Journal of the American Statistical Association}, 106(496),
  2011.

\bibitem{Bickel09}
Peter~J. Bickel, Ya'acov Ritov, and Alexandre~B. Tsybakov.
\newblock Simultaneous analysis of lasso and dantzig selector.
\newblock {\em The Annals of Statistics}, 37(4):1705--1732, 2009.

\end{thebibliography}
\end{document}